\tikzstyle{edge} = [fill,opacity=.5,fill opacity=.5,line cap=round, line join=round, line width=50pt]
\theoremstyle{definition}
\newtheorem{proposition}{Proposition}[section]
\newtheorem{lemma}[proposition]{Lemma}
\newtheorem{theorem}[proposition]{Theorem}
\newtheorem{definition}[proposition]{Definition}
\newtheorem{example}[proposition]{Example}
\newtheorem{corollary}[proposition]{Corollary}
\newcommand{\rw}[1]{[{#1}]} 
\newcommand{\class}[1]{R_\bullet({#1})} 
\newcommand{\pinn}{\textsf{p}}
\newcommand{\vale}{\textsf{v}}
\newcommand{\pv}{\textsf{pv}}
\newcommand{\rev}[1]{{#1}^r}
\newcommand{\s}{\sigma}
\title{One-element commutation classes}
\author[Bridget Eileen Tenner]{Bridget Eileen Tenner$^*$}
\address{Department of Mathematical Sciences, DePaul University, Chicago, IL, USA}
\email{bridget@math.depaul.edu}
\thanks{$^*$Research partially supported by NSF Grant DMS-2054436.}
\keywords{}%
\subjclass[2020]{Primary: 05A05; 
  Secondary: 20F55, 
  05A15
}
\begin{document}

\begin{abstract}
For any permutation $w$, we characterize the reduced words of $w$ that are their own commutation class. When $w$ is the long element $n(n-1)\cdots 321$ and $n \ge 4$, there are exactly four such words.
\end{abstract}

\maketitle

\section{Introduction}

Permutations in $\mathfrak{S}_n$ can be written as products of the simple reflections $\{\s_i : i \in [1,n-1]\}$, where $\s_i$ swaps $i$ and $i+1$. The products with minimal length are the \emph{reduced decompositions} of a permutation $w$, that length is the \emph{length} $\ell(w)$ of the permutation, and recording the string of subscripts in such a product gives a \emph{reduced word} of $w$. The set of reduced words of a permutation $w$ is $R(w)$, and it is known \cite{matsumoto, tits} that all elements of $R(w)$ are related by sequences of two local moves:
\begin{align*}
\text{commutation move: } & ij \leftrightarrow ji \text{ when } |i-j| > 1, \text{ and}\\
\text{braid move: } & i(i+1)i \leftrightarrow (i+1)i(i+1).
\end{align*}

Much work has been done to understand the set $R(w)$ and its equivalence classes, primarily as defined by the commutation relation (see, for example, \cite{bedard, bergeron, fishel, gutierres, jonsson welker}). More specifically, two reduced words are \emph{commutation equivalent}, denoted $\sim$, if they differ by a sequence of commutation moves. The \emph{commutation classes} of a permutation $w$ are the quotient of $R(w)$ by the equivalence relation $\sim$.

\begin{definition}
For any permutation $w$, let $\class{w} \subseteq R(w)$ be the reduced words of $w$ that are their own commutation classes.
\end{definition}

By definition, elements of $\class{w}$ are the reduced words of $w$ that support no commutation moves, meaning that they are precisely the reduced words of $w$ in which every pair of consecutive positions contains consecutive values. That being said, we can, in fact, describe elements of $\class{w}$ more precisely.

In this note, we characterize elements of $\class{w}$ for all $w$. This includes the special case $w = n(n-1)\cdots 321$, answering a question recently asked on \mbox{MathOverflow~\cite{mathoverflow}.}

\section{Preliminary results}

We first define several useful symmetries on strings. Throughout this discussion, we will distinguish the one-line notation of a permutation from its reduced words by writing reduced words using brackets; for example, $\rw{123} \in R(2341)$. 

\begin{definition}
Fix a string $s$ on the alphabet $\{1,2,\ldots, n-1\}$. The \emph{reverse} of $s$ is $\rev{s}$, obtained by writing $s$ in reverse order. The \emph{complement} of $s$ is $\overline{s}$, obtained by replacing each letter $i$ by $n-i$. These operations commute, and the \emph{reverse complement} of $s$ is $\rev{\overline{s}}$. The \emph{symmetries} of a reduced word $\rw{s}$ are the reduced words $\{\rw{s}, \rw{\rev{s}}, \rw{\overline{s}}, \rw{\rev{\overline{s}}}\}$. We can analogously define the \emph{symmetries} of any product of simple reflections.
\end{definition}

There is a natural visualization of these symmetries if we think of ``graphing'' a string $s = s_1 s_2 \cdots s_t$ by plotting the points $\{(i,s_i)\}$. Reversing reflects the graph across the vertical line at $(t+1)/2$, complementing reflects the graph across the horizontal line at $n/2$, and the reverse complement rotates the graph $180^\circ$ around the point $((t+1)/2,n/2)$.

A string $s$ represents a reduced word $\rw{s}$ if and only if all (any) of $s$'s symmetries also represent reduced words. That being said, while the symmetries of a reduced word are also reduced words, they might not be reduced words for the same permutation. For example, if $\rw{s} \in R(w)$, then $\rw{\rev{s}} \in R(w^{-1})$.

In the following lemma, and subsequently, we will say that a product $\s_{i_1}\cdots \s_{i_r}$ (likewise, the word representing the reflections' subscripts) is \emph{not reduced} if $\ell(\s_{i_1}\cdots \s_{i_r}) < r$. 

\begin{lemma}\label{lem:zig zag isn't reduced}
For any $i < j$, the product 
$$\s_i \s_{i+1} \cdots \s_{j-1} \s_j \s_{j-1} \cdots \s_{i+1}\s_i \s_{i+1} \cdots \s_{j-1}\s_j$$
is not reduced. The same is true for its symmetries. 
\end{lemma}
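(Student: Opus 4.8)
The plan is to show that the word fails the reducedness criterion at its very last letter. Writing the product as $P\s_j$, where $P$ is the product of its first $3(j-i)$ reflections, I would show that $P$ has a descent at position $j$, so that right-multiplication by $\s_j$ strictly decreases length. Since a reduced word must strictly increase in length at each step while the full product is spelled by $3(j-i)+1$ reflections, this gives $\ell(P\s_j) \le \ell(P) - 1 < 3(j-i)+1$, i.e. the product is not reduced.

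Concretely, I would compute the one-line notation of $P = \s_i\cdots\s_j\s_{j-1}\cdots\s_i \cdot \s_{i+1}\cdots\s_{j-1}$ using the convention that right-multiplication by $\s_k$ swaps the entries in positions $k$ and $k+1$. Every reflection appearing has subscript in $[i,j]$, so $P$ fixes everything outside the window $\{i,i+1,\ldots,j+1\}$ and it suffices to track that window. The ``up--down'' factor $\s_i\cdots\s_j\s_{j-1}\cdots\s_i$ is the transposition $(i\ \, j{+}1)$: the up-run $\s_i\cdots\s_j$ bubbles the value $i$ rightward to produce the window $[\,i{+}1,i{+}2,\ldots,j{+}1,i\,]$, and the down-run $\s_{j-1}\cdots\s_i$ then bubbles the value $j{+}1$ leftward into position $i$, leaving $[\,j{+}1,i{+}1,i{+}2,\ldots,j,i\,]$. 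Right-multiplying by the short run $\s_{i+1}\cdots\s_{j-1}$ bubbles the value $i{+}1$ from position $i{+}1$ out to position $j$, so the window of $P$ becomes $[\,j{+}1,i{+}2,i{+}3,\ldots,j,i{+}1,i\,]$.

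This reads off $P(j) = i{+}1$ and $P(j+1) = i$, both of which remain valid when $j = i{+}1$, where the short run is empty and $P = (i\ \, i{+}2)$. Since $i{+}1 > i$, position $j$ is a descent of $P$, whence $\ell(P\s_j) = \ell(P) - 1 < \ell(P) \le 3(j-i)$, and the product $P\s_j$ is therefore not reduced. I expect the main obstacle to be keeping the bubbling bookkeeping correct; once the two entries $P(j)$ and $P(j+1)$ are pinned down the conclusion is immediate, and the cleanest write-up may verify only these two positions rather than the entire window.

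Finally, for the symmetries I would invoke the equivalence recorded just before the lemma: a product of simple reflections is reduced if and only if each (equivalently, all) of its symmetries is reduced. Since reversal corresponds to passing to the inverse and complementation to the length-preserving automorphism $\s_k \mapsto \s_{n-k}$, neither operation can change whether a word is reduced; as our product is not reduced, neither is any of its four symmetries.
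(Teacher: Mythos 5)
Your proof is correct, and while it rests on the same bubbling computation as the paper's, the concluding mechanism is genuinely different. The paper computes the one-line notation of the \emph{entire} product, namely $1\cdots(i-1)\,(j+1)(i+2)(i+3)\cdots j\,i(i+1)\,(j+2)\cdots$, and then asserts that this permutation has length $3j-3i-1$, two less than the letter count $3j-3i+1$; verifying that assertion amounts to counting the inversions of an explicit permutation. You instead stop one letter short: the same bubbling gives the window of the prefix $P$ (which agrees with the paper's final permutation except in the last two positions), you pin down only the two entries $P(j)=i+1$ and $P(j+1)=i$, and you invoke the standard descent criterion $\ell(P\s_j)=\ell(P)-1$ when $P(j)>P(j+1)$. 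What the paper's route buys is the exact length of the product and the explicit permutation it represents (which is mildly illuminating); what yours buys is more local bookkeeping -- no inversion count at all, just two entries and the exchange/descent property -- and your edge-case check at $j=i+1$ is a nice touch. On the symmetries, the paper's proof is silent (the claim is left to follow implicitly from the equivalence stated just before the lemma), whereas you spell out why reversal (passing to the inverse) and complementation (the length-preserving automorphism $\s_k\mapsto\s_{n-k}$) preserve reducedness; your version is, if anything, the more complete one on this point.
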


\begin{proof}
The given product has $3j-3i + 1$ terms. However, it defines the permutation
$$1 \cdots (i-1) {\color{red} (j+1) (i+2) (i+3) \cdots j i (i+1)} (j+2) (j+3) \cdots,$$
where everything not in red is fixed, and this permutation has length $3j - 3i - 1$.
\end{proof}

From this we conclude an important property of any element of $\class{w}$, which essentially combines Lemma~\ref{lem:zig zag isn't reduced} with a discrete version of the Intermediate Value Theorem.

\begin{corollary}\label{cor:increasing run}
Let $x < y$ be two letters appearing in a reduced word $\rw{s} \in \class{w}$. Then every consecutive subsequence of $\rw{s}$ of maximal length that occurs between the nearest instances of $x$ and $y$ is a sequence of consecutive values from $x$ to $y$.
\end{corollary}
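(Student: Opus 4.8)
The plan is to read off the hypothesis geometrically and then combine a discrete intermediate value argument with Lemma~\ref{lem:zig zag isn't reduced}. First I would record the structural consequence of $\rw{s} \in \class{w}$: such a word admits no commutation move and is reduced, so no two adjacent letters are equal and no two adjacent letters differ by more than $1$; hence every pair of consecutive letters differs by exactly $1$. In the ``graphing'' language of the paper, the graph of $\rw{s}$ is therefore a lattice path all of whose steps are $+1$ or $-1$, and the corollary asserts that between the nearest instances of $x$ and $y$ this path climbs straight from $x$ up to $y$.

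Next I would fix an occurrence of $x$ and the nearest occurrence of $y$; using the reverse symmetry (Lemma~\ref{lem:zig zag isn't reduced} applies to symmetries) I may assume the $x$ precedes the $y$, at positions $p<q$ with no $x$ and no $y$ strictly between them. The discrete intermediate value step then shows every letter strictly between positions $p$ and $q$ lies in the open interval $(x,y)$: since the steps are $\pm 1$ and the path starts at $x<y$, any interior value $\ge y$ would force the path to cross the value $y$ at an interior position, and any interior value $\le x$ would force it to re-cross $x$ on its way back up to $y$ at $q$; either outcome produces a second instance of $y$ or of $x$ between the two chosen ``nearest'' instances, a contradiction.

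It then remains to upgrade ``confined to $(x,y)$'' to ``monotone.'' Suppose the path from $p$ to $q$ is not the increasing run $x, x+1, \ldots, y$. Then, being trapped in the strip and pinned at $x$ and $y$ at its ends, it must ascend, descend, and ascend again; my goal is to isolate a consecutive subword of the exact shape $i, i+1, \ldots, j, j-1, \ldots, i, i+1, \ldots, j$ of Lemma~\ref{lem:zig zag isn't reduced} (or one of its symmetries). Such a factor is not reduced, and a word containing a non-reduced factor is itself not reduced, contradicting $\rw{s} \in R(w)$. I would make the extraction by induction on $y-x$: the case $y-x=1$ is immediate since there are no interior letters; in general the step off $x$ is forced to be $x+1$, and if $x+1$ does not reappear before position $q$ then $p+1$ and $q$ are nearest instances of $x+1$ and $y$ and the inductive hypothesis finishes the argument, whereas if $x+1$ does reappear the path must have bumped above $x+1$ and returned, and an innermost such return supplies the required zigzag factor.

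The main obstacle is precisely this last extraction: converting ``not monotone'' into an \emph{honest} consecutive up--down--up (or down--up--down) subword whose three legs are each monotone over a common range $[i,j]$, so that it matches the lemma verbatim. A wiggly $\pm 1$ path has many local extrema, and a naive choice of turning points yields legs riddled with smaller bumps that are not literal instances of the lemma's word. Ensuring minimality of the chosen bump/return --- or, equivalently, letting the induction on $y-x$ peel off one value at a time so that the relevant legs are automatically monotone --- is where the genuine care lies. By contrast, the discrete intermediate value step and the appeal to Lemma~\ref{lem:zig zag isn't reduced} are routine once the $\pm 1$-path picture is in place.
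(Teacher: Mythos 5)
Your skeleton is the same as the paper's: membership in $\class{w}$ forces adjacent letters to differ by exactly $1$, the nearest-instance hypothesis traps every intermediate letter strictly between $x$ and $y$, and the contradiction is to come from Lemma~\ref{lem:zig zag isn't reduced}. Your Case A is also sound. The gap is Case B: the claim that ``an innermost such return supplies the required zigzag factor'' is false. Consider a segment between nearest instances of $x$ and $y$ reading
$$x,\; x+1,\; x+2,\; x+3,\; x+2,\; x+1,\; x+2,\; x+3,\; \ldots,\; y.$$
The innermost bump is $x+2,\,x+3,\,x+2$, and it is flanked on \emph{both} sides by $x+1$; the window $x+1,x+2,x+3,x+2,x+1$ around it is a clean pyramid containing no symmetry of the word in Lemma~\ref{lem:zig zag isn't reduced}. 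A zigzag does exist, namely $x+1,x+2,x+3,x+2,x+1,x+2,x+3$, but it lives at a larger scale: its third leg is the continuation \emph{after} the first return to $x+1$, and nothing in your argument shows that this continuation climbs monotonically (in the variant $x,x+1,x+2,x+3,x+2,x+1,x+2,x+1,x+2,\ldots$ it does not, and the zigzag then sits somewhere else entirely, as $x+2,x+1,x+2,x+1$). Nor can the inductive hypothesis close Case B on its own: in the displayed segment, every nearest-instance pair of letters at distance less than $y-x$ \emph{is} monotone, so no appeal to the induction detects anything wrong; the contradiction must come from explicitly locating a copy of the lemma's word, and ``innermost'' points at the wrong place.

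What Case B needs is a global argument about the turning values of the segment. For instance: list the interior local maxima $M_1,\ldots,M_T$ and local minima $\mu_1,\ldots,\mu_T$ in order (the segment begins and ends with ascents, so these alternate), and set $\mu_0=x$, $M_{T+1}=y$. If $\mu_{k-1}\le\mu_k$ and $M_{k+1}\ge M_k$ for some $k$, then the three monotone runs around the pair $M_k,\mu_k$ contain the lemma's word with $i=\mu_k$, $j=M_k$; symmetrically, if $M_k\ge M_{k+1}$ and $\mu_{k+1}\le\mu_k$, they contain its reverse. If neither condition ever held, then $\mu_0=x<\mu_1$ would force $M_2<M_1$, hence $\mu_2>\mu_1$, hence $M_3<M_2$, and so on, yielding $M_{T+1}<M_T$ --- contradicting $M_{T+1}=y>M_T$. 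Some argument of this kind (or an induction engineered to produce it) is the actual content of the step you yourself flagged as the crux. To be fair, the paper compresses exactly this extraction into a ``without loss of generality,'' so you correctly identified where the care lies; but the mechanism you propose to supply that care does not work.
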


\begin{proof}
Consecutive positions in $\rw{s}$ must have consecutive values. Thus, if the desired property did not hold, then, without loss of generality, we would have $x < i < j < y$ with
$$\rw{s} = \rw{\cdots x \cdots {\color{red} i(i+1)\cdots (j-1)j(j-1)\cdots(i+1)i(i+1)\cdots (j-1)j }\cdots y \cdots}.$$
But Lemma~\ref{lem:zig zag isn't reduced} means the red substring is not reduced, contradicting $\rw{s} \in R(w)$.
\end{proof}

We will use terminology from other work on permutations, introduced in \cite{dnpt} and appearing in several subsequent papers, but we are slightly more generous with the definitions here. In particular, we allow endpoints to qualify.

\begin{definition}
Let $s$ be a string of real numbers. The \emph{endpoints} of $s$ are its leftmost and rightmost values. A \emph{pinnacle} of $s$ is a value that is larger than its immediate neighbor(s), and a \emph{vale} is a value that is smaller than its immediate neighbor(s). Write $\pinn(s)$ for the substring of pinnacles of $s$, and $\vale(s)$ for the substring of vales. The substring of pinnacles and vales will be written $\pv(s)$.
\end{definition}

The pinnacles and vales necessarily alternate in $\pv(s)$. Moreover, the endpoints of $s$ always appear in $\pv(s)$. 

\begin{example}
In the string $s = {\color{red} 6}543{\color{blue} 2}3{\color{red} 4}32{\color{blue} 1}2345{\color{red} 6}5{\color{blue} 4}{\color{red} 5}4{\color{blue} 3}4{\color{red} 5}$, the pinnacles have been marked in red, the vales have been marked in blue, and $\pv(s)  = {\color{red} 6}{\color{blue} 2}{\color{red} 4}{\color{blue} 1}{\color{red} 6}{\color{blue} 4}{\color{red} 5}{\color{blue} 3}{\color{red} 5}$.
\end{example}

We will want to recognize when strings have a property similar to unimodality.

\begin{definition} 
Let $s = s_1\cdots s_t$ be a string of real numbers. If there exist $i$ and $j$ such that $1 \le i \le j \le i+1 \le t$ and
$$s_1 < \cdots < s_i = s_j > \cdots > s_t,$$
then $s$ is a \emph{wedge}. If
$$s_1 > \cdots > s_i = s_j < \cdots < s_t,$$
then $s$ is a \emph{vee}. 
If $i = j$, then that wedge or vee is \emph{strict}.
\end{definition}

\section{The main result}

\begin{theorem}\label{thm:characterizing}
For any $w$, if $\rw{s} \in \class{w}$ then
\begin{enumerate}[label=(\alph*)]
\item $\pinn(s)$ is a wedge,
\item $\vale(s)$ is a vee, 
\item at least one of $\pinn(s)$ or $\vale(s)$ is strict, 
\item the minimum and maximum values of $\pv(s)$ appear consecutively, and
\item if $\pinn(s)$ (or $\vale(s)$) has more than one $x$, then one of those $x$s is an endpoint of $s$.
\end{enumerate}
\end{theorem}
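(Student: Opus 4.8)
The plan is to treat every $\rw{s}\in\class{w}$ as a lattice path (the ``walk'') that plots the letters $s_1,\dots,s_t$: since $\rw{s}$ supports no commutation move, consecutive letters differ by exactly $1$, so the walk takes steps $\pm1$ and its pinnacles and vales are literally its peaks and valleys. The engine for everything is Corollary~\ref{cor:increasing run}, which I will use in the following contrapositive form: for two levels $x<y$, the walk cannot touch them in four consecutive \emph{nearest} steps $y,x,y,x$ (equivalently $x,y,x,y$), where ``nearest'' means no value in $\{x,y\}$ occurs between successive touches. Indeed, between nearest touches the walk runs monotonically through consecutive values from $x$ to $y$, so four such touches assemble into a contiguous factor $\s_x\s_{x+1}\cdots\s_y\s_{y-1}\cdots\s_x\s_{x+1}\cdots\s_y$, which is not reduced by Lemma~\ref{lem:zig zag isn't reduced}; since any factor of a reduced word is reduced, this is a contradiction. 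I will call this forbidden configuration a \emph{zig-zag}. Two symmetries cut the work: complementation interchanges $\pinn(s)$ with $\vale(s)$ and wedges with vees, so (b) follows from (a) and the vale case of (e) from the pinnacle case, while the reverse lets me fix an orientation below.

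For (a) I will show that the pinnacle values are unimodal, that the global maximum $M$ occurs at most twice, and that no value below $M$ repeats among pinnacles; together these say $\pinn(s)$ is a wedge. For unimodality, a strict interior local minimum gives three pinnacles $\pi_a>\pi_b<\pi_c$; with $y=\pi_b$ and $v$ the larger of the two vales flanking $\pi_b$ (so $v<y$), the descents and ascents around $\pi_b$ produce touches $y,v,y,v$, a zig-zag. That $M$ occurs at most twice is immediate: three peaks at $M$ with the larger intervening vale as $v$ give $M,v,M,v$, and no overshoot is possible since nothing exceeds $M$. For the no-repeat claim below $M$, a repeated value $c<M$ is adjacent to a strictly higher pinnacle; taking $v$ to be the larger of the two vales flanking the repeat and orienting toward the higher pinnacle (so the deeper dip supplies a clean touch of $v$ on its monotone ascent) again yields $c,v,c,v$. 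The recurring subtlety is that a dip strictly below $v$ inserts a repeated $v$-touch and breaks the alternation, so $v$ must be taken as the \emph{shallower} relevant vale; this orientation bookkeeping is routine but must be done carefully.

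Next I will prove (d) and then (c), using that (a),(b) already force the wedge/vee shapes. If $M$ and the minimal vale $m$ were not adjacent in $\pv(s)$, then walking from $M$ toward $m$ the first vale $\nu$ satisfies $\nu>m$ and is followed by a pinnacle $\pi_1<M$; since the vales form a vee descending to $m$, the vale after $\pi_1$ lies strictly below $\nu$, and the touches $\pi_1,\nu,\pi_1,\nu$ (the last on the descent past level $\nu$) form a zig-zag, giving (d). For (c), suppose both $\pinn(s)$ and $\vale(s)$ were non-strict, so $M$ and $m$ each occur twice. Using (d) one finds that the vale between the two $M$'s must itself equal $m$ and that the walk never drops to $m$ outside the segment between the two $M$'s; then $m$ occurs only once, contradicting its doubling. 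Each alternative placement of the second $m$ relative to the two $M$'s produces an $m,M,m,M$ or a $\nu,M,\nu,M$ zig-zag.

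Finally (e), which I expect to be the main obstacle. By (a) a repeated pinnacle value must be $M$, occurring exactly twice, and I must show one occurrence is an endpoint. The difficulty is that when both maxima are interior and the vale between them is $m$ with strictly higher vales on the outside, the walk contains \emph{no} zig-zag, so Corollary~\ref{cor:increasing run} alone is insufficient; for instance $\rw{2321232}$ is not reduced yet avoids every zig-zag. The remedy is to first apply Coxeter moves, which preserve both the group element and the length, hence reducedness. If both maxima are interior, each is flanked by $\s_{M-1}$, so the relevant factor is $\s_{M-1}\,\s_M\s_{M-1}\cdots\s_m\cdots\s_{M-1}\s_M\,\s_{M-1}$. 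Applying the braid move $\s_{M-1}\s_M\s_{M-1}\to\s_M\s_{M-1}\s_M$ on the left and commuting the freed $\s_M$ rightward past all letters $\le M-2$ produces the factor $\s_M\s_{M-1}\s_M\s_{M-1}$, a zig-zag, so the word is not reduced and one maximum must be an endpoint. The crux of the whole argument is recognizing that (e) cannot be read off the walk directly and requires this preliminary normalization by braid and commutation moves before Lemma~\ref{lem:zig zag isn't reduced} applies.
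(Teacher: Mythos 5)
Your overall framework---reading a word in $\class{w}$ as a $\pm1$ walk and using Lemma~\ref{lem:zig zag isn't reduced} with Corollary~\ref{cor:increasing run} to forbid four alternating nearest touches of two levels---is sound, and it is genuinely different from the paper, which instead inducts on $\ell(w)$ by deleting the last letter. Your arguments for (b), (c), (d), and for the unimodality and ``$M$ at most twice'' parts of (a) can be made to work. The gap is in the remaining claim of (a) and, through it, in (e). The claim that no value below $M$ repeats in $\pinn(s)$ is \emph{false}: a wedge forbids equal values only on the same side of the peak, not on opposite sides, and opposite-side repeats genuinely occur. For instance, $\rw{32345432123}$ lies in $\class{643152}$ (it has $11$ letters, $643152$ has $11$ inversions, and consecutive letters differ by $1$), and its pinnacle sequence is $3\,5\,3$, in which $3 < M = 5$ repeats. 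Your zig-zag argument for that claim tacitly assumes the two occurrences of $c$ are adjacent in $\pinn(s)$; when they straddle $M$, the excursion up to $M$ interrupts the alternation of touches, and no zig-zag can exist there---nor should one, since the word can be reduced.

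This overstatement is harmless for (a) itself (unimodality, $M$ at most twice, and no \emph{adjacent} repeats below $M$ already force a wedge), but it is fatal for (e): your proof of (e) opens with ``by (a) a repeated pinnacle value must be $M$,'' so you only ever treat $x = M$. The case $x < M$ is where (e) has real content; for example, it is exactly what rules out $\pinn(s) = 34543$, since only the first and last pinnacles can be endpoints of $s$, so the two $4$s never can be. Worse, such words can satisfy (a)--(d) and contain no zig-zag at all (take the walk with pinnacle sequence $34543$ and vale sequence $2112$: one can check every pair of levels, and no four alternating nearest touches occur), so Corollary~\ref{cor:increasing run} cannot recover this case, and your braid trick does not extend to it: braiding at one peak and commuting the freed $\s_M$ across the valley works only because everything strictly between the two flanking $(M-1)$s is at most $M-2$ and hence commutes with $\s_M$. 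For $x < M$ the walk rises above $x$ between the two occurrences, so $\s_x$ cannot be commuted across; what is needed is the alternating cascade in the paper's Lemma~\ref{lem:critical repeated pinnacles}---braid at $(x-1)x(x-1)$, commute the freed $\s_x$ past letters below $x-1$, braid at the next ascent $(x-1)x$, commute the freed $\s_{x-1}$ past letters above $x$, and so on across the whole word---which the paper then deploys inside its induction so that one occurrence of $x$ may be assumed to sit at the word's end. Your proposal contains neither this cascade nor any substitute for it, so property (e), and with it the theorem, remains unproven.
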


The proof of the theorem will involve property about certain reduced words.

\begin{lemma}\label{lem:critical repeated pinnacles}
Suppose that $\rw{s} \in \class{w}$.
\begin{itemize}
\item Suppose that $x$ appears twice in $\pinn(s)$, with one of those appearances being the right (resp., left) endpoint of $s$ and the other not being the left (resp., right) endpoint of $s$. Then $\ell(w\s_{x-1}) < \ell(w)$ (resp., $\ell(\s_{x-1}w) < \ell(w)$). 
\item Suppose that $x$ appears twice in $\vale(s)$, with one of those appearances being the right (resp., left) endpoint of $s$ and the other not being the left (resp., right) endpoint of $s$. Then $\ell(w\s_{x+1}) < \ell(w)$ (resp., $\ell(\s_{x+1}w) < \ell(w)$). 
\end{itemize}
\end{lemma}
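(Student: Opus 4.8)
My plan is to reduce the four assertions to a single one via the symmetries of the reduced word, then to prove that case by a short manipulation at the right end of $\rw{s}$ followed by a structural analysis of the ``frozen'' region that the interior pinnacle creates.

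\emph{Step 1: reduce to one case.} I would first use the reverse and complement. Reversal sends $\rw{s}\in\class{w}$ to $\rw{\rev{s}}\in\class{w^{-1}}$, interchanges the left and right endpoints, and preserves pinnacles; since $\ell(\s_c w^{-1})=\ell(w\s_c)$ and $\ell(w^{-1})=\ell(w)$, it matches the ``left endpoint'' versions with the ``right endpoint'' versions. Complementation sends $\rw{s}$ to $\rw{\overline{s}}$, a reduced word of a permutation of the same length, interchanges pinnacles and vales, sends a value $x$ to $n-x$ and $\s_{x-1}$ to $\s_{(n-x)+1}$, and preserves the relevant length comparison; it therefore matches the pinnacle statement with the vale statement. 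Thus it suffices to treat one case: $x$ appears twice in $\pinn(s)$, one appearance being the right endpoint, and to prove $\ell(w\s_{x-1})<\ell(w)$.

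\emph{Step 2: a local reduction.} Since the right endpoint is a pinnacle of value $x$, we have $s_{t-1}=x-1$ and $s_t=x$ (consecutive positions carry consecutive values). Appending $x-1$ gives a word for $w\s_{x-1}$ ending in $(x-1)\,x\,(x-1)$; one braid move rewrites this as $x\,(x-1)\,x$, so $w\s_{x-1}$ is represented by $s_1\cdots s_{t-2}\,x\,(x-1)\,x$, a word of the same length. Hence $\ell(w\s_{x-1})<\ell(w)=t$ will follow as soon as I show that the prefix $s_1\cdots s_{t-2}\,x$ is not reduced, equivalently that $x$ is a right descent of the prefix permutation $\s_{s_1}\cdots\s_{s_{t-2}}$.

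\emph{Step 3: exploit the interior pinnacle.} Let $p$ be the rightmost occurrence of the letter $x$ among positions $<t$; it exists and is interior because $x$ occurs as an interior pinnacle, and $p\le t-2$ since $s_{t-1}=x-1$. Because steps are $\pm1$ and $x$ does not recur in $(p,t)$, every letter strictly between positions $p$ and $t$ is at most $x-1$; in particular no $\s_x$ or $\s_{x+1}$ fires there, so positions $x+1,\ldots,n$ are \emph{frozen} throughout steps $p+1,\ldots,t-1$. Tracking values by right multiplication, this means the value in position $x+1$ in $\s_{s_1}\cdots\s_{s_{t-2}}$ is exactly the value placed there at step $p$, while the letter $x$ at step $p$ creates a descent at position $x$, and in the frozen window that value in position $x$ is only ever altered by $\s_{x-1}$ moves. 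So everything reduces to showing that this descent at position $x$ survives until step $t-2$.

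\emph{Main obstacle.} The crux is precisely this persistence, and it is where the \emph{pinnacle} hypothesis (not merely an interior occurrence of $x$) must be used: a priori a $\s_{x-1}$ in the window could drag a value below the frozen $w_{p}(x+1)$ into position $x$ and destroy the descent, and no purely local count forbids this. I expect to rule it out by combining the pinnacle's factor $(x-1)\,x\,(x-1)$, whose effect is to reverse the three entries in positions $x-1,x,x+1$, with Corollary~\ref{cor:increasing run}: the corollary forces the rises and falls of $\rw{s}$ in the frozen region to be monotone, so any value sinking back below $w_p(x+1)$ at position $x$ would manifest as an up--down--up pattern between two fixed levels, i.e.\ a zig-zag of the form forbidden in reduced words by Lemma~\ref{lem:zig zag isn't reduced}. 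Converting this ``no destroying $\s_{x-1}$'' principle into a clean, case-free argument — rather than separately handling the possibilities that the rightmost $x$ is not itself a pinnacle or that $x-1$ recurs inside the frozen region — is the step I anticipate will be the main difficulty.
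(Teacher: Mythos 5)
Your Steps 1 and 2 are sound: the symmetry reduction to the single case (right endpoint, pinnacle, conclusion $\ell(w\s_{x-1})<\ell(w)$) matches what the paper does implicitly by ``analogous arguments,'' and the append-and-braid observation correctly reduces the lemma to showing that $s_1\cdots s_{t-2}\,x$ is not reduced, i.e.\ that the prefix $\s_{s_1}\cdots\s_{s_{t-2}}$ has a descent at position $x$. The problem is that this descent statement \emph{is} the lemma: all of the content is in showing it, and your proposal stops exactly there. Your ``Main obstacle'' paragraph concedes that the persistence of the descent through the $\s_{x-1}$-moves in the frozen window is unproven, and offers only a heuristic (``I expect to rule it out by\ldots''); moreover your choice of $p$ as the rightmost occurrence of the letter $x$ discards the actual hypothesis --- that some interior occurrence of $x$ is a \emph{pinnacle}, i.e.\ sits in a factor $(x-1)\,x\,(x-1)$ --- since that rightmost occurrence may have $x+1$ as its left neighbor. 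So as written there is a genuine gap, and it is the step where the repeated-pinnacle hypothesis must enter.

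For comparison, the paper never tracks permutation entries at all. It uses the consecutive-values property together with Lemma~\ref{lem:zig zag isn't reduced} (via Corollary~\ref{cor:increasing run}) to pin down the shape of $s$ between the interior pinnacle and the right endpoint: the word crosses level $x$ finitely many times, with the intervening maximal regions lying entirely below $x-1$ or entirely above $x$. Then it runs an explicit chain of moves: braid the interior factor $(x-1)x(x-1)$ to $x(x-1)x$, commute the freed letter through the next region (possible because that region's letters differ from it by at least $2$), braid at the next crossing, and so on, until the word terminates in $(x-1)x(x-1)$. This exhibits a reduced word for $w$ whose last letter is $x-1$, giving $\ell(w\s_{x-1})<\ell(w)$ directly. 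Your sketched zig-zag argument is in the same spirit --- the structural constraints you would need are exactly the ones the paper extracts --- but carrying it out case-free is precisely the work you have deferred, so the proposal cannot be credited as a proof.
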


\begin{proof}
We will prove one of these results, and the others can be proved by analogous arguments.

Suppose that $x$ appears twice in $\pinn(s)$, with one of those appearances being the right endpoint of $s$ and the other not being the left endpoint of $s$. Then $\rw{s}$ has a form like
$$\rw{\underbrace{\cdots}_{<x-1} (x-1)x(x-1) \underbrace{\cdots}_{<x-1} (x-1)x \underbrace{\cdots}_{>x} x(x-1) \underbrace{\cdots}_{<x-1} (x-1)x \underbrace{\cdots}_{>x} x(x-1) \underbrace{\cdots}_{<x-1} (x-1)x}.$$
From this we can do a sequence of braid and commutation moves to produce additional elements of $R(w)$, as demonstrated below, where the changes resulting from braids are indicated in red and from commutations in blue:
\begin{eqnarray*}
&\rw{\underbrace{\cdots}_{<x-1} {\color{red}x(x-1)x} \underbrace{\cdots}_{<x-1} (x-1)x \underbrace{\cdots}_{>x} x(x-1) \underbrace{\cdots}_{<x-1} (x-1)x \underbrace{\cdots}_{>x} x(x-1) \underbrace{\cdots}_{<x-1} (x-1)x},&\\
&\rw{\underbrace{\cdots}_{<x-1} x(x-1) {\color{blue}\underbrace{\cdots}_{<x-1}} {\color{blue}x}(x-1)x \underbrace{\cdots}_{>x} x(x-1) \underbrace{\cdots}_{<x-1} (x-1)x \underbrace{\cdots}_{>x} x(x-1) \underbrace{\cdots}_{<x-1} (x-1)x},&\\
&\rw{\underbrace{\cdots}_{<x-1} x(x-1) \underbrace{\cdots}_{<x-1} {\color{red}(x-1)x(x-1)} \underbrace{\cdots}_{>x} x(x-1) \underbrace{\cdots}_{<x-1} (x-1)x \underbrace{\cdots}_{>x} x(x-1) \underbrace{\cdots}_{<x-1} (x-1)x},&\\
&\vdots&\\
&\rw{\underbrace{\cdots}_{<x-1} x(x-1) \underbrace{\cdots}_{<x-1} (x-1)x \underbrace{\cdots}_{>x} x(x-1) \underbrace{\cdots}_{<x-1} (x-1)x \underbrace{\cdots}_{>x} x(x-1) \underbrace{\cdots}_{<x-1} {\color{red}(x-1)x(x-1)}}.&
\end{eqnarray*}
Thus $\ell(w\s_{x-1}) < \ell(w)$.
\end{proof}

We can now prove the main result. 

\begin{proof}[Proof of Theorem~\ref{thm:characterizing}]
We prove the theorem by induction on the length of the permutation $w$, and the result is trivial to check in small cases. Suppose that $\rw{s} \in \class{w}$, that $\ell(w) = t > 3$, and that the result holds for all permutations of length less than $t$. Because $\rw{s}$ is a reduced word, we know that consecutive positions of $\rw{s}$ always containing consecutive values is equivalent to the condition $\rw{s} \in \class{w}$.

Consider $\rw{s} \in \class{w}$, where $s = s_1 \cdots s_t$. Define the string $s'$ and the permutation $w'$ so that $s' = s_1 \cdots s_{t-1}$ and $\rw{s'} \in R(w')$. That $\rw{s'}$ is a reduced word follows from the fact that $\rw{s}$ is reduced. Moreover, $\rw{s'} \in \class{w'}$ because $\rw{s}$ supports no commutation moves, and necessarily $\rw{s'}$ supports none either. Thus the inductive hypothesis applies to $\rw{s'} \in R(w')$. Without loss of generality, assume that $s_{t-1}$ is a pinnacle of $s'$ (the case of a vale can be argued analogously.) There are now two cases, based on the relative sizes of $s_{t-1}$ and $s_t$.

\vspace{.1in}

\noindent \framebox{Case 1:} $s_t = s_{t-1} - 1$. Thus $\pinn(s) = \pinn(s')$ (so property (a) holds for $s$) and $\vale(s) = \vale(s') s_t$.

\vspace{.1in}
Let $x < s_{t-1}$ be the vale preceding $s_{t-1}$; that is, $\pv(s') = \cdots xs_{t-1}$. 

If $x = s_t$, then $x$ appears immediately to the left of $s_{t-1}$ in $s$. Then $x$ must be the left endpoint of $s$, in order to avoid violating Lemma~\ref{lem:zig zag isn't reduced}. From here it is obvious that $s$ has the desired properties.

Now consider $x < s_t$. This gives properties (b) and (c). Property (d) follows because we have not altered the pinnacle sequence and we have only added a non-minimal value to the vale sequence. Regarding property (e), it is okay if $s_t$ repeats a value of $\vale(s')$ because $s_t$ is an endpoint of $s$. Thus the only potential concern is that the value $s_{t-1}$ might have appeared twice in $\pinn(s')$, and not as the left endpoint of $s'$. But then, by Lemma~\ref{lem:critical repeated pinnacles}, the word $\rw{s}$ would not be reduced, which would be a contradiction. 

\vspace{.1in}

\noindent \framebox{Case 2:} $s_t = s_{t-1} + 1$. Thus $\vale(s) = \vale(s')$ (so property (b) holds for $s$) and $\pinn(s)$ is obtained from $\pinn(s')$ by replacing its last letter $s_{t-1}$ by $s_t$.

\vspace{.1in}
If $\pinn(s') = s_{t-1}$, then the result is immediate. Otherwise, let $y$ be the preceding pinnacle. If this is the left endpoint of $s$, then the result is straightforward to check. So let us suppose that $\pv(s') = \cdots x_1yx_2s_{t-1}$.

If $y \le s_{t-1}$, then $s_{t-1}$ was a maximum value in $\pinn(s')$ and thus $s_t$ is the unique maximum value in $\pinn(s)$ and the requirements for $s$ follow from the inductive hypothesis on $s'$.

If $y = s_{t-1} + 1 = s_t$, then we need $x_1 > x_2$ in order for Lemma~\ref{lem:zig zag isn't reduced} not to contradict the fact that $\rw{s} \in R(w)$. Then the wedge properties of $s'$ says that $x_2$ is the unique minimum of $\vale(s')$, and requirement (c) on $s'$ means that $y$ is the unique maximum in $\pinn(s')$. Thus $\rw{s}$ has the desired properties. 

Finally, suppose $y > s_{t-1} + 1$. Then $\pinn(s)$ inherits wedge properties and strictness from $\pinn(s')$, so property (a) holds. And because $\vale(s) = \vale(s')$, we also have properties (c) and (d). As for property (e), it is okay if $s_t$ repeats a value of $\vale(s')$ because $s_t$ is an endpoint of $s$. The value $s_{t-1}$ appears strictly fewer times in $\pinn(s)$ than it had done in $\pinn(s')$, so there is no concern about property (e) regarding this value.

\vspace{.1in}

Therefore the reduced word $\rw{s}$ has the desired properties.
\end{proof}

\section{The long element}

We conclude this note by considering the special case of the long element $w_0 = n(n-1)\cdots 321 \in \mathfrak{S}_n$. When $n < 4$, it is trivial to see that $\class{w_0} = R(w_0)$. On the other hand, when $n \ge 4$, the set $\class{w}$ has exactly four elements.

\begin{corollary}\label{cor:long element}
The set $\class{n(n-1)\cdots 321}$ consists exactly of the symmetries of
\begin{equation}\label{eqn:the special string}
\rw{123\cdots (n-1)\cdots 321 \ 234\cdots (n-2) \cdots 432 \ 345 \cdots (n-3) \cdots 543 \ \cdots}.
\end{equation}
\end{corollary}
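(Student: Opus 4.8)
The plan is to use Theorem~\ref{thm:characterizing} to pin down the shape of any $\rw{s}\in\class{w_0}$, and then use the extremal nature of $w_0$ to force $\rw{s}$ to be one of the four symmetries of the string in~\eqref{eqn:the special string}. First I would verify that the string $S$ in~\eqref{eqn:the special string} is itself a reduced word of $w_0$ lying in $\class{w_0}$: consecutive positions in $S$ always differ by exactly $1$ (so $S$ supports no commutation moves), and $S$ has the right length $\binom{n}{2}=\ell(w_0)$. One convenient way to see reducedness is to recognize~\eqref{eqn:the special string} as the standard ``staircase'' reduced word for $w_0$; then its four symmetries under reverse/complement are automatically reduced words, and since $w_0=w_0^{-1}=\overline{w_0}$ (the long element is fixed by all these operations), each symmetry again lies in $R(w_0)$ and supports no commutation moves, hence lies in $\class{w_0}$. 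That establishes containment of the four symmetries.

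\medskip

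The harder direction is that these are the only elements. Here I would feed an arbitrary $\rw{s}\in\class{w_0}$ into Theorem~\ref{thm:characterizing} and exploit the special feature of $w_0$ that it has no right or left descents available to \emph{extend} — equivalently, $\ell(w_0\s_i)<\ell(w_0)$ and $\ell(\s_i w_0)<\ell(w_0)$ for \emph{every} $i\in[1,n-1]$, since $w_0$ is the unique longest element. The key observation is that for $w_0$ the alphabet of $s$ must use every letter $1,\dots,n-1$ (each simple reflection appears, and in fact $\s_i$ appears exactly $i(n-i)$ times in any reduced word of $w_0$), so in particular both $1$ and $n-1$ occur. By Corollary~\ref{cor:increasing run}, the values between the nearest occurrences of the minimum and maximum letters form a full increasing (or decreasing) run from $1$ to $n-1$. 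Property~(d) of Theorem~\ref{thm:characterizing} says the global min and global max of $\pv(s)$ appear consecutively; combined with the fact that the extreme values are $1$ and $n-1$, this forces the wedge $\pinn(s)$ and the vee $\vale(s)$ to have a very rigid profile.

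\medskip

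The main technical step — and the one I expect to be the chief obstacle — is converting these structural constraints into the exact monotone ``zig-zag'' pattern of~\eqref{eqn:the special string}. The idea is that properties~(a)--(c) force $\pinn(s)$ to increase to a single maximum $n-1$ and then decrease, while $\vale(s)$ decreases to a single minimum $1$ and then increases, with strictness on at least one side; and property~(e) constrains how repeated extreme values can occur, which by Lemma~\ref{lem:critical repeated pinnacles} would create a descent of $w_0$ that does not exist in the right place, or else force an endpoint. Because $w_0$ admits a descent in \emph{every} direction, the endpoint letters are tightly constrained: I would argue that the first two letters of $s$ must be $12$ (or its three symmetric analogues $(n-1)(n-2)$, and the reversed versions), since the opening run, by Corollary~\ref{cor:increasing run}, must climb monotonically and any other start would either repeat an extreme value illegally or fail the wedge/vee conditions. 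Once the endpoints and the first run are fixed as $12\cdots(n-1)$, the remaining word is itself in $\class{w}$ for a shorter permutation (peeling off the increasing run), and an induction — or direct appeal to the uniqueness already built into Theorem~\ref{thm:characterizing} together with the length count $\ell(w_0)=\binom{n}{2}$ — forces the nested staircase $\cdots\,234\cdots432\,\cdots$ to continue exactly as in~\eqref{eqn:the special string}. Since the four choices of opening run correspond precisely to the four symmetries, this yields $|\class{w_0}|=4$ and completes the proof.
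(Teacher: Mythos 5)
Your containment direction is fine and matches the paper's. The uniqueness direction, however, has a genuine gap at its central step. You propose to rule out bad configurations of repeated extreme values via Lemma~\ref{lem:critical repeated pinnacles}, arguing such repeats ``would create a descent of $w_0$ that does not exist in the right place.'' But for $w_0$ this is vacuous: \emph{every} $\s_i$ is a descent of $w_0$ on both sides, i.e.\ $\ell(w_0\s_i) < \ell(w_0)$ and $\ell(\s_i w_0) < \ell(w_0)$ for all $i$ (a fact you yourself invoke earlier in the proposal), so the conclusion of Lemma~\ref{lem:critical repeated pinnacles} can never produce a contradiction when $w = w_0$. This matters because Theorem~\ref{thm:characterizing} alone does not force the word to begin or end with the full run $123\cdots(n-1)\cdots 321$: property (c) permits \emph{both} $\pinn(s)$ and $\vale(s)$ to be strict, i.e.\ a word in which $1$ and $n-1$ each appear exactly once, and nothing in your sketch excludes this. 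The paper excludes it by a separate, $w_0$-specific tracking argument: if $1$ and $n-1$ each occur once (say the $1$ first), then after $\s_1$ displaces the value $1$ and $\s_{n-1}$ later displaces $n$, the value $n$ can never reach the first position, contradicting $w_0(1)=n$. This is the step that makes one of $\pinn$, $\vale$ non-strict and, with property (e), forces $\rw{123\cdots(n-1)\cdots321}$ or its complement as a prefix or suffix. Also, Theorem~\ref{thm:characterizing} has no ``uniqueness built into'' it --- it lists necessary conditions only --- so your fallback appeal to it cannot close this hole.

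Your induction is also set up on the wrong decomposition. You peel off only the increasing run $12\cdots(n-1)$; the quotient permutation is then not a smaller staircase, so neither the inductive hypothesis about $\class{m\cdots 321}$ nor any statement in the paper applies to what remains. The paper instead peels off the entire block $a = 123\cdots(n-1)\cdots321$, whose corresponding permutation is $n23\cdots(n-1)1$, leaving $v = 1(n-1)(n-2)\cdots 432n$ --- a shifted copy of the long element of $\mathfrak{S}_{n-2}$ --- to which the inductive hypothesis applies after reindexing; the four ways of concatenating then give exactly the four symmetries. Finally, a small but real error: the letter $i$ does \emph{not} appear exactly $i(n-i)$ times in every reduced word of $w_0$ (e.g.\ $\rw{212} \in R(321)$ contains only one $1$); all you need, and all that is true, is that every letter appears at least once.
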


\begin{proof}
Let $\rw{s}$ be the string given in \eqref{eqn:the special string}. First note that when $n \ge 4$, the symmetries of $\rw{s}$ are all distinct. Also, when $i < j$, we have that $\rw{i(i+1)\cdots (j-1)j(j-1)\cdots (i+1)i}$ is a reduced word for the permutation swapping $i$ and $j+1$ and fixing all other values. Thus $\rw{s} \in R(w_0)$, and the symmetries of $\rw{s}$ are also reduced words for $w_0$.

It remains to show that there are no other elements of $\class{w_0}$. For the remainder of the proof, suppose that $n \ge 4$, and assume the result holds for $\class{m\cdots 321}$ when $m < n$.

Fix $\rw{t} \in \class{w_0}$. Suppose, for the purpose of obtaining a contradiction, that $\rw{t}$ contains exactly one $1$ and exactly one $n-1$. Without loss of generality, suppose that the $1$ appears to the left of the $n-1$. The simple reflection $\s_1$ moves $1$ out of position in the permutation, and later multiplying on the right by $\s_{n-1}$ moves $n$ out of position. However, there will not subsequently be a way to move $n$ into the first position of the permutation, which is a contradiction. Thus exactly one of $\pinn(t)$ and $\vale(t)$ is strict. In particular, $\rw{t}$ must have either
$$\rw{a}:= \rw{123\cdots(n-1)\cdots321} \hspace{.25in} \text{or} \hspace{.25in} \rw{\overline{a}}$$
as a prefix or a suffix. In other words, we are writing $w_0$ as a product of $n23\cdots (n-1)1$ and the permutation $v:= 1(n-1)(n-2)\cdots 432n$. From the inductive hypothesis, and a small reindexing, we know that elements of $\class{v}$ are exactly the symmetries of
$$\rw{b} := \rw{234\cdots (n-2) \cdots 432 \ 345 \cdots (n-3) \cdots 543 \ 456 \cdots (n-4) \cdots 654 \ \cdots}.$$
Consecutive positions of $t$ have consecutive values, so there are only four possibilities for $\rw{t}$:
$$\rw{ab}, \ \rw{ba} , \ \rw{\overline{ab}} , \ \text{and} \ \rw{\overline{ba}}.$$
Since $\rev{a} = a$ and $\rev{b} = b$, these are exactly the symmetries of the string given in \eqref{eqn:the special string}.
\end{proof}

Finally, we note that the long element is not the only permutation with $|\class{w}| = 4$. For example,
\begin{align*}
\class{7265413} = \big\{ & 4345654321234543, 3456543212345434\\
& 5434565432123454, 4543456543212345\big\}.
\end{align*}

\section*{Acknowledgements}

I am grateful to Richard Stanley for drawing my attention to the MathOverflow post \cite{mathoverflow} and for inspired terminology. I appreciate the comments and advice of an anonymous referee.

\end{document}